\newtheorem{theorem}{Theorem}
\newtheorem{corollary}[theorem]{Corollary}
\newtheorem{proposition}[theorem]{Proposition}
\newtheorem{lemma}[theorem]{Lemma}
\theoremstyle{definition}
\newtheorem{example}[theorem]{Example}
\theoremstyle{remark}
\newtheorem{remark}[theorem]{Remark}
\DeclareMathOperator{\spa}{span}
\DeclareMathOperator{\supp}{supp}
\title[Regular evolution algebras are closed under subalgebras]{Regular evolution algebras are closed \\ under subalgebras}
\author[M. Ladra]{Manuel Ladra\textsuperscript{1}}
\address{\textsuperscript{1}Department of Mathematics \& CITMAga, Universidade de Santiago de Compostela, 15782 Santiago de Compostela, Spain}
\email{manuel.ladra@usc.es}
\author[A. Pérez-Rodríguez]{Andrés Pérez-Rodríguez\textsuperscript{2}}
\address{\textsuperscript{2}Department of Mathematics \& CITMAga, Universidade de Santiago de Compostela, 15782 Santiago de Compostela, Spain}
\email{andresperez.rodriguez@usc.es}
\subjclass{17D92, 17A60}
\keywords{Evolution algebras, regular evolution algebras, subalgebras}
\begin{document}
\maketitle

\begin{abstract} The main goal of this note is to show that subalgebras of regular evolution algebras are themselves evolution algebras. This allows us to assume, without loss of generality, that every subalgebra in the regular setting has a basis consisting of vectors with disjoint supports. Finally, we use this result to characterise the existence of codimension-one subalgebras in regular evolution algebras.
\end{abstract}
\vspace{0.2cm}

Given an arbitrary field $\mathbb{K}$, an \textit{evolution algebra} over $\mathbb{K}$ is a $\mathbb{K}$-algebra $\mathcal{E}$ which admits a distinguished basis $B=\{e_1,\dots,e_n,\dots\}$, called a \textit{natural basis}, satisfying the condition that $e_ie_j=0$ for all $i\neq j$. In this note, we focus on finite-dimensional evolution algebras, meaning that $B$ is a finite set. For a given natural basis $B=\{e_1,\dots,e_n\}$ in $\mathcal{E}$, the scalars $a_{ij}\in\mathbb{K}$ satisfying $e_i^2=\sum_{j=1}^na_{ij}e_j$ are called the \textit{structure constants} of $\mathcal{E}$ relative to $B$. The matrix $M_B(\mathcal{E})=(a_{ij})_{i,j=1}^n$ is said to be the \textit{structure matrix} of $\mathcal{E}$ relative to $B$. Evolution algebras were introduced by J. P. Tian and P. Vojt\v{e}chovsk\'{y} as a mathematical framework for modelling inheritance mechanisms that do not follow classical Mendelian genetics. Their commutative but non-associative multiplication reflects self-replication patterns observed in specific non-Mendelian genetic models (see \cite{Tian_08,TV_06}). It is worth noting that linear algebra has provided powerful tools for modelling and analysing genetic systems since they were first studied by Etherington \cite{E_40}.

Since evolution algebras are not defined by identities, one of the main challenges in working with them is that they are neither closed under subalgebras (see \cite[Example~2.3]{CSV_16}) nor under ideals (see \cite[Example~2.7]{CSV_16}). However, unlike the case of subalgebras, the structure of ideals has been extensively studied (see \cite{CBGT_24,CKS_19}). In particular, \cite[Proposition~4.2]{BCS_22_nat} establishes that if an evolution algebra $\mathcal{E}$ is \textit{regular} ($\mathcal{E}=\mathcal{E}^2$, or equivalently, $M_B(\mathcal{E})$ is non-singular) then every nonzero ideal is \textit{basic}, that is, every ideal admits a natural basis consisting of vectors from the natural basis.

Regular evolution algebras have been thoroughly investigated and possess particularly desirable properties. For instance, they have a unique natural basis (see \cite[Theorem~4.4]{EL_15}), their automorphism groups are finite (see \cite[Theorem~4.8]{EL_15}), and they are universally finite (see \cite{CLTV_21}). Moreover, their algebras of derivations have also been described \cite[Theorem~4.1]{EL_21}. Indeed, this short paper aims to explore subalgebras in the regular setting.
First, note that the one-dimensional subalgebras are given by the non-trivial solutions of a non-linear polynomial system of equations.
\begin{lemma}
	Let $\mathcal{E}$ be a regular evolution algebra with natural basis $B=\{e_1,\dots,e_n\}$ and structure matrix $M_B(\mathcal{E})$. Then, the one-dimensional subspace $\spa\big\{\alpha_1e_1+\dots+\alpha_ne_n\}$ is a proper nonzero subalgebra of $\mathcal{E}$ if and only if the vector $(\alpha_1,\dots,\alpha_n)$ is a non-trivial solution of 
	\begin{equation}\label{sist}
		\begin{pmatrix}
			x_1^2 \\ \vdots \\ x_n^2
		\end{pmatrix}=\big(M_B(\mathcal{E})^t\big)^{-1}\begin{pmatrix}
			x_1 \\ \vdots \\ x_n
		\end{pmatrix}.
	\end{equation}
\end{lemma}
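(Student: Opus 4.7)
The plan is to directly translate the condition that $\spa\{u\}$ be closed under multiplication, where $u = \alpha_1 e_1 + \dots + \alpha_n e_n$, into the linear system \eqref{sist}, exploiting the orthogonality $e_i e_j = 0$ for $i \neq j$ of the natural basis.

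First, I would compute
\[
u^2 = \sum_{i=1}^n \alpha_i^2\, e_i^2 = \sum_{j=1}^n \Bigl(\sum_{i=1}^n a_{ij}\alpha_i^2\Bigr) e_j,
\]
and observe that $\spa\{u\}$ is a subalgebra if and only if $u^2 = \lambda u$ for some $\lambda \in \mathbb{K}$. In coordinates this amounts to the matrix identity $M_B(\mathcal{E})^t (\alpha_1^2, \dots, \alpha_n^2)^t = \lambda (\alpha_1, \dots, \alpha_n)^t$, which is essentially \eqref{sist} up to the presence of the scalar $\lambda$.

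The crucial step is then to rule out $\lambda = 0$ using regularity. If $\lambda = 0$, then $(\alpha_1^2, \dots, \alpha_n^2)^t \in \ker M_B(\mathcal{E})^t$, which is trivial because $M_B(\mathcal{E})$ is non-singular; hence every $\alpha_i = 0$, contradicting $u \neq 0$. Once $\lambda \neq 0$ is established, replacing the generator $u$ by $u/\lambda$ leaves the subspace unchanged and yields coefficients $(\alpha_1/\lambda, \dots, \alpha_n/\lambda)$ which are a non-trivial solution of \eqref{sist}. The converse direction is immediate: any non-trivial solution of \eqref{sist} produces a nonzero vector $v$ with $v^2 = v$, whose span is a one-dimensional (hence proper, assuming $\dim \mathcal{E} \geq 2$) subalgebra.

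The only mild obstacle is the bookkeeping around the scalar $\lambda$: the closure condition $u^2 \in \spa\{u\}$ a priori only forces $u^2 = \lambda u$ for some unknown $\lambda$, and regularity is precisely what permits rescaling the generator and recovering \eqref{sist} itself, rather than a family of eigenvalue-like equations, as the governing relation.
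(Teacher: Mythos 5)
Your proposal is correct and follows essentially the same route as the paper: compute $u^2$ using $e_ie_j=0$, impose $u^2=\lambda u$, and rescale the generator to normalise the scalar to $1$, which yields \eqref{sist}. The only difference is that you explicitly justify $\lambda\neq0$ via the non-singularity of $M_B(\mathcal{E})^t$, a point the paper's proof asserts without comment.
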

\begin{proof}
	A subspace $\spa\{\alpha_1e_1+\dots+\alpha_ne_n\}$ is a subalgebra if and only if there exists a scalar $k\neq0$ such that
	\begin{align*}
		\left(\sum_{i=1}^n\alpha_ie_i\right)^2 = \sum_{i=1}^{n} \alpha_i^2 (a_{i1} e_1 + \dots + a_{in} e_n)= \sum_{j=1}^{n} \left( \sum_{i=1}^{n} a_{ij} \alpha_i^2 \right) e_j=k\left(\sum_{i=1}^n\alpha_ie_i\right).
	\end{align*}
	For this condition to hold, it is necessary that $\sum_{i=1}^{n} a_{ij} \alpha_i^2=k\alpha_i$ for all $j=1,\dots,n$. Moreover, since it can be assumed that $k=1$ (by setting $\alpha_i^\prime=\frac{\alpha_i}{k}$), we obtain~\eqref{sist}.
\end{proof}

In particular, our main theorem establishes that regular evolution algebras are closed under subalgebras, a property that provides a more precise and structured approach to understanding subalgebras and significantly simplifies their study in higher dimensions.

\begin{theorem}\label{th:reg_basis}
	Let $\mathcal{E}$ be a regular evolution algebra over any field $\mathbb{K}$. Then, every subalgebra of $\mathcal{E}$ admits a natural basis.
\end{theorem}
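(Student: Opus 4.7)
My plan is to reduce the theorem to a combinatorial statement about supports and argue by induction on $r=\dim S$. The regularity of $\mathcal{E}$ forces the family $\{e_1^2,\dots,e_n^2\}$ to be a basis of $\mathcal{E}$, since its coordinate matrix relative to $B$ is $M_B(\mathcal{E})$, which is non-singular. Consequently, for any $u=\sum_i u_ie_i$ and $v=\sum_i v_ie_i$ in $\mathcal{E}$, the formula $uv=\sum_iu_iv_ie_i^2$ combined with the linear independence of $\{e_i^2\}$ gives
\[
uv=0 \iff u_iv_i=0\ \text{ for every } i \iff \supp(u)\cap\supp(v)=\emptyset,
\]
where supports are taken relative to $B$. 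It follows that a linearly independent family $\{u_1,\dots,u_r\}$ in $\mathcal{E}$ is a natural basis of the subalgebra it spans if and only if the $\supp(u_k)$ are pairwise disjoint, so the theorem reduces to showing that every subalgebra $S$ of $\mathcal{E}$ admits a basis whose members have pairwise disjoint supports.

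I would proceed by strong induction on $r$, with $r\le 1$ immediate. For $r\ge 2$, pick $v\in S\setminus\{0\}$ whose support $I:=\supp(v)$ is minimal among the non-empty supports of elements of $S$. The heart of the argument is the \emph{proportionality on $I$}: for every $w\in S$ there exists $\lambda\in\mathbb{K}$ with $w_i=\lambda v_i$ for all $i\in I$. Granting this, the subspace $\widehat{S}:=\{w\in S:w_i=0\ \forall\,i\in I\}$ satisfies $S=\mathbb{K}v\oplus\widehat{S}$, so $\dim\widehat{S}=r-1$, and $\widehat{S}\subseteq\spa\{e_j:j\notin I\}$. Combining proportionality with the minimality of $I$, one verifies that $\widehat{S}$ sits as a subalgebra of a smaller regular evolution algebra—for instance, the basic ideal of $\mathcal{E}$ associated to the closure of $\{1,\dots,n\}\setminus I$ under the structure graph, which is again a regular evolution algebra of smaller dimension by~\cite{BCS_22_nat}. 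The inductive hypothesis then produces a natural basis of $\widehat{S}$ with pairwise disjoint supports all lying in $\{1,\dots,n\}\setminus I$; adjoining $v$ yields the required natural basis of $S$.

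The main obstacle is the proportionality claim. The subalgebra condition gives $vw\in S$ and $v^2\in S$, whose expansions in the basis $\{e_i^2\}$ read
\[
vw=\sum_{i\in I\cap\supp(w)}v_iw_ie_i^2, \qquad v^2=\sum_{i\in I}v_i^2e_i^2.
\]
Picking $\mu\in\mathbb{K}$ appropriately forces the $e_{i_0}^2$-coefficient of $y:=vw-\mu v^2$ to vanish for some $i_0\in I$, and by the linear independence of $\{e_i^2\}$ the element $y\in S$ is non-zero precisely when $w|_I$ fails to be a scalar multiple of $v|_I$. The obstacle is that a strictly smaller $\{e_i^2\}$-support of $y$ does not by itself yield strictly smaller $B$-support, because the change-of-basis matrix $M_B(\mathcal{E})$ may spread the coordinates back out. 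My plan is to iterate: form further products of $y$ with $v$ and $w$ and repeat the argument in the $\{e_i^2\}$-basis, eventually using the invertibility of $M_B(\mathcal{E})$ to exhibit a non-zero element of $S$ whose $B$-support is a proper non-empty subset of $I$, contradicting the minimality of $I$. The two faces of regularity—the linear independence of $\{e_i^2\}$ and the invertibility of $M_B(\mathcal{E})$—are both needed here, and this bookkeeping is the technical heart of the proof; once proportionality is secured, the remaining steps are formal.
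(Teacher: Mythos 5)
Your opening reduction is correct: in a regular evolution algebra $\{e_1^2,\dots,e_n^2\}$ is linearly independent, so $uv=0$ exactly when the $B$-supports of $u$ and $v$ are disjoint, and the theorem is equivalent to every subalgebra having a basis with pairwise disjoint supports. The inductive argument you build on this, however, has two genuine gaps. First, the proportionality claim is never established: as you yourself point out, the product $vw=\sum_i v_iw_ie_i^2$ is controlled in the basis $\{e_1^2,\dots,e_n^2\}$, whereas the minimality of $I$ concerns $B$-supports, and $M_B(\mathcal{E})$ can spread a small $\{e_i^2\}$-support back over all of $B$; the proposed remedy of ``iterating further products'' is only a plan, with no argument that the iteration terminates or ever produces a nonzero element of $S$ whose $B$-support is properly contained in $I$. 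Second, even granting proportionality, the complement $\widehat{S}=\{w\in S\colon w|_I=0\}$ need not be a subalgebra: for $w,w'\in\widehat{S}$ the product $ww'=\sum_{j\notin I}w_jw_j'e_j^2$ can have nonzero $B$-coordinates on $I$ (already for $e_1^2=e_2$, $e_2^2=e_1$, $e_3^2=e_3$, with $S=\spa\{e_1,e_2\}$ and $I=\{1\}$ one gets $\widehat{S}=\spa\{e_2\}$ and $e_2^2=e_1\notin\widehat{S}$), so the inductive hypothesis does not apply to it; and the suggested repair of embedding $\widehat{S}$ into the basic ideal generated by $\{1,\dots,n\}\setminus I$ fails because the hereditary closure of that set can be all of $\{1,\dots,n\}$, giving no drop in dimension.

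The paper's proof sidesteps all of this with a short direct argument: put a basis of the subalgebra $U$ in reduced row echelon form, so that after reordering $B$ one has $u_i=e_i+v_i$ with $v_i\in\spa\{e_{m+1},\dots,e_n\}$; then $u_ku_l=v_kv_l\in\spa\{e_{m+1}^2,\dots,e_n^2\}$ for $k\neq l$, while each $u_i^2=e_i^2+v_i^2$ has leading term $e_i^2$. Regularity makes $\{e_1^2,\dots,e_n^2\}$ linearly independent, so $\{u_1^2,\dots,u_m^2\}$ is again a basis of $U$, and writing $u_ku_l\in U$ in that basis forces every coefficient, hence $u_ku_l$ itself, to vanish. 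If you wish to keep your support-based formulation, the cleanest route is to prove the theorem first by this echelon-form argument and then deduce the proportionality statement as a corollary, rather than the other way around.
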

\begin{proof}
	Let $\mathcal{E}$ be a regular evolution algebra with natural basis $\{e_1,\dots,e_n\}$, and
	Consider a proper subalgebra  $U=\spa\{u_i\colon 1\leq i\leq m\}$, where each $u_i=\sum_{j=1}^{n}\mu_{ij}e_j$ with $\mu_{ij}\in\mathbb{K}$, of dimension $m<n$. 
	Without loss of generality, we assume that the matrix $(\mu_{ij})_{i,j=1}^m,n$ is in reduced row echelon form. Furthermore, by appropriately reordering the natural basis, we can assume that the leading coefficients form the identity matrix, i.e. 
	\[u_1=e_1+v_1,\; u_2=e_2+v_2,\;\dots,\;u_m=e_m+v_m,\]
	where $v_1,\dots,v_m\in\spa\{e_{m+1},\dots,e_n\}.$
	
	For the sake of contradiction, assume that $U$ does not admit a natural basis. Then, there exist distinct indices $k,l\in\{1,\dots,m\}$ such that $u_ku_l\neq0$. Moreover, as $\mathcal{E}$ is regular, we have $\operatorname{rank}\{e_1^2,\dots,e_n^2\}=n$, which implies that $\operatorname{rank}\{u_1,\dots,u_m\}=\operatorname{rank}\{u_1^2,\dots,u_m^2\}=m$. Then, as $\{u_1^2,\dots,u_m^2\}$ is also a basis of the subalgebra $U$, there exist scalars $\alpha_1,\dots,\alpha_m\in\mathbb{K}$ such that
	\[\alpha_1u_1^2+\dots+\alpha_mu_m^2+u_ku_l=0.\]
	Expanding each term, 
	\[\alpha_1(e_1^2+v_1^2)+\dots+\alpha_m(e_m^2+v_m^2)+u_ku_l=0.\]
	Since $u_ku_l\in\spa\{e_{m+1}^2,\dots,e_n^2\}$, the linear independence of $\{e_1^2,\dots,e_n^2\}$ forces $\alpha_1=\dots=\alpha_m=0$, contradicting that $u_ku_l\neq0$.
	\end{proof}
	
	\begin{remark}
		The converse of the previous result does not hold in general. As shown in \cite[Corollary 3.4]{LPP_25_lattice}, all subalgebras of a nilpotent evolution algebra with the maximum index of nilpotency are basic ideals. For instance, the unique proper nonzero subalgebras of the evolution algebra $\mathcal{E}$ with natural basis $\{e_1,e_2,e_3\}$ and product given by $e_1^2=e_2$, $e_2^2=e_3$ and $e_3^2=0$ are $\spa\{e_3\}$ and $\spa\{e_2,e_3\}$.
	\end{remark}
	
	Recall that, given an evolution algebra $\mathcal{E}$ with natural basis $B=\{e_1,\dots,e_n\}$ and an element $u=\sum_{i=1}^n\mu_ie_i\in\mathcal{E}$, we define its \textit{support} as $\supp_B(u)\coloneqq\{i:\mu_i\neq0\}$.
	\begin{corollary}
		Let $\mathcal{E}$ be a regular evolution algebra over any field $\mathbb{K}$ with natural basis $B=\{e_1,\dots,e_n\}$. Then, every subalgebra $U$ of $\mathcal{E}$ admits a basis $\{u_1,\dots,u_m\}$ where the supports of its elements are disjoint.
	\end{corollary}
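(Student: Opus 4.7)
The plan is to deduce the corollary directly from Theorem~\ref{th:reg_basis}: that result already provides a natural basis $\{u_1,\dots,u_m\}$ of $U$, meaning $u_iu_j=0$ whenever $i\neq j$. I will then show that, in the regular setting, this orthogonality condition is actually equivalent to the stronger statement that the supports $\supp_B(u_i)$ are pairwise disjoint.

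To carry this out, I would write each basis element as $u_i=\sum_{k=1}^{n}\mu_{ik}e_k$ and expand the product using the orthogonality $e_ke_l=0$ for $k\neq l$, obtaining
\[
u_iu_j=\sum_{k=1}^{n}\mu_{ik}\mu_{jk}\,e_k^{2}.
\]
Since $\mathcal{E}$ is regular, the structure matrix $M_B(\mathcal{E})$ is non-singular, hence $\{e_1^{2},\dots,e_n^{2}\}$ is a linearly independent set in $\mathcal{E}$. Therefore the relation $u_iu_j=0$ forces $\mu_{ik}\mu_{jk}=0$ for every index $k$, which is precisely the condition $\supp_B(u_i)\cap\supp_B(u_j)=\emptyset$.

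Applying this equivalence to every pair $i\neq j$ yields the desired basis with pairwise disjoint supports. There is no serious obstacle in the argument: the entire content lies in pairing Theorem~\ref{th:reg_basis} with the key consequence of regularity that the squares of the natural basis vectors are linearly independent, so I would expect the proof to be essentially a two-line computation after invoking the theorem.
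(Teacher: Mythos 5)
Your proposal is correct and follows essentially the same route as the paper: invoke Theorem~\ref{th:reg_basis} to get a natural basis and then use the fact that in a regular evolution algebra $uv=0$ if and only if the supports of $u$ and $v$ are disjoint. The only difference is that you prove this last fact by the (correct) direct computation with the linearly independent squares $e_k^2$, whereas the paper simply cites it from \cite[Remark 4.3]{EL_15}.
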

	\begin{proof}
		The result follows from Theorem \ref{th:reg_basis} and from the fact that if we have two elements $u$ and $v$ in a regular evolution algebra, then $uv=0$ if and only if their supports are disjoint (see \cite[Remark 4.3]{EL_15}). 
	\end{proof}
	
	This result significantly simplifies the study of subalgebras in the regular case. For example, given a regular evolution algebra $\mathcal{E}$ with natural basis $B=\{e_1,\dots,e_n\}$, any subalgebra of codimension one can be written as
	\begin{align}\label{eq:sub_codim1}
	\spa\{e_i,v\colon i\neq p,q\},
	\end{align}
	where $v\in\spa\{e_p,e_q\}$ for some distinct indices $p,q\in\{1,\dots,n\}$. Without loss of generality, in this work we will assume that $p<q$. Using this description, we finally establish precise conditions for the existence of subalgebras of codimension one in regular evolution algebras.

	\begin{proposition}\label{prop:sub_codim1}
	Let $\mathcal{E}$ be a regular evolution algebra over any field $\mathbb{K}$ of dimension greater than two with natural basis $B=\{e_1,\dots,e_n\}$ and structure matrix $M_B(\mathcal{E})=(a_{ij})_{i,j=1}^n$. 
	
	Suppose that $\mathcal{E}$ has a subalgebra of codimension one. In this case, there necessarily exist distinct indices $p,q\in\{1,\dots,n\}$ such that \[\operatorname{rank}M_{p,q}\leq1,\]where $M_{p,q}$ denotes the submatrix of $M_B(\mathcal{E})$ formed by the $p$-th and $q$-th columns after removing the $p$-th and $q$-th rows.
	
	Conversely, if $\operatorname{rank}M_{p,q}\leq1$, the existence of subalgebras of codimension one can be characterised as follows:
	\begin{enumerate}[\rm (a)]
	\item If $\operatorname{rank}M_{p,q}=1$, then at least one row of $M_{p,q}$ is non-zero; let $(\alpha,\beta)$ be such a row. In this 
	case, $\mathcal{E}$ has a subalgebra of codimension one if and only if
	\begin{align}\label{eq:cond}
		\alpha^2\beta a_{pp}+\beta^3a_{qp}=\alpha^3a_{pq}+\alpha\beta^2a_{qq},
	\end{align}
	and the corresponding subalgebra is given by \eqref{eq:sub_codim1} with $v=\alpha e_p+\beta e_q$.
	
	\item  If $\operatorname{rank}M_{p,q}=0$, then the subalgebras of codimension one are \eqref{eq:sub_codim1}
	where $v=e_p+\lambda e_q$ for any $\lambda\in\mathbb{K}^*$ satisfying the equation
	\begin{align}\label{eq:cond_pol}
		a_{qp}\lambda^3-a_{qq}\lambda^2+ a_{pp}\lambda-a_{pq}=0,
	\end{align} and additionally, $\spa\{e_i\colon i\neq q\}$ when $a_{pq}=0$ and $\spa\{e_i\colon i\neq p\}$ when $a_{qp}=0$.
	\end{enumerate}
	\end{proposition}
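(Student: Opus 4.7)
The plan is to use the corollary preceding the proposition to reduce every codimension-one subalgebra to the canonical form (\ref{eq:sub_codim1}), and then to translate $U^2\subseteq U$ into explicit polynomial conditions on the coefficients of $v$ and the structure constants. First, I would show that every codimension-one subalgebra does admit the form (\ref{eq:sub_codim1}): by the corollary, $U$ possesses a basis $\{u_1,\dots,u_{n-1}\}$ with pairwise disjoint supports, and since these supports are disjoint subsets of $\{1,\dots,n\}$ whose cardinalities sum to at most $n$, at most one $u_i$ can have support of size greater than one (and that size is necessarily two). Reordering $B$ accordingly yields $U=\spa\{e_i,v\colon i\neq p,q\}$ with $v=\alpha e_p+\beta e_q$, where the degenerate cases $\alpha=0$ or $\beta=0$ correspond to coordinate subalgebras.

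Next, I would impose the two defining requirements $e_i^2\in U$ (for $i\neq p,q$) and $v^2\in U$ separately. For each $i\neq p,q$, writing $e_i^2=\sum_j a_{ij}e_j$ and subtracting the part supported outside $\{p,q\}$ (which already lies in $U$), the residual $a_{ip}e_p+a_{iq}e_q$ must be a scalar multiple of $v$. This forces every row of $M_{p,q}$ to be proportional to $(\alpha,\beta)$, whence $\rank M_{p,q}\leq 1$; this establishes the necessary condition of the proposition. Expanding $v^2=\alpha^2 e_p^2+\beta^2 e_q^2$, the components with index outside $\{p,q\}$ automatically lie in $U$, so the only new requirement is that $(\alpha^2 a_{pp}+\beta^2 a_{qp})e_p+(\alpha^2 a_{pq}+\beta^2 a_{qq})e_q$ be proportional to $v$, which by a $2\times 2$ determinant computation is exactly (\ref{eq:cond}).

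Finally, I would split into the two cases of the converse. In case (a), with $\rank M_{p,q}=1$, the direction of $v$ is determined up to scalar by any nonzero row $(\alpha,\beta)$ of $M_{p,q}$, and the sole remaining condition is (\ref{eq:cond}). In case (b), with $\rank M_{p,q}=0$, the rank condition imposes no restriction on $(\alpha,\beta)$, so I would separate the subcases $\alpha=0$, $\beta=0$, and $\alpha,\beta\neq 0$: the first two reduce (\ref{eq:cond}) to $a_{qp}=0$ and $a_{pq}=0$ respectively, yielding the coordinate subalgebras $\spa\{e_i\colon i\neq p\}$ and $\spa\{e_i\colon i\neq q\}$; the generic subcase, normalised to $v=e_p+\lambda e_q$ with $\lambda\neq 0$, turns (\ref{eq:cond}) into precisely (\ref{eq:cond_pol}). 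The main obstacle is not conceptual but rather the careful bookkeeping of these degenerate sub-cases, ensuring that the coordinate subalgebras are neither double-counted nor missed as limiting values of the generic family.
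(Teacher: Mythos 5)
Your proposal is correct and follows essentially the same route as the paper: reduce every codimension-one subalgebra to the form \eqref{eq:sub_codim1} via the disjoint-supports corollary, then translate $e_i^2\in U$ (for $i\neq p,q$) and $v^2\in U$ into the rank condition and condition \eqref{eq:cond} by projecting onto $\spa\{e_p,e_q\}$. The only cosmetic difference is that you derive \eqref{eq:cond} uniformly from a single $2\times 2$ determinant, where the paper instead runs through the subcases $\alpha=0$, $\beta=0$, $\alpha\beta\neq0$ explicitly; both handle the degenerate coordinate subalgebras correctly.
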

\begin{proof}
Let $\pi_{p,q}$ be the linear projection of $\mathcal{E}$ onto $\spa\{e_p,e_q\}$. Observe that \eqref{eq:sub_codim1} with $v=\alpha e_p+\beta e_q$ defines a subalgebra of $\mathcal{E}$ if and only if the following conditions hold: 
\begin{align}
\pi_{p,q}(e_i^2)&=a_{ip}e_p+a_{iq}e_q\in\spa\{\alpha e_p+\beta e_q\}, \text{ for all }i\neq p,q;\text{ and } \label{eq:cond1} \\
\pi_{p,q}\big((\alpha e_p+\beta e_q)^2\big)&=(\alpha^2a_{pp}+\beta^2a_{qp})e_p+(\alpha^2a_{pq}+\beta^2a_{qq})e_q\in\spa\{\alpha e_p+\beta e_q\}.\label{eq:cond2}
\end{align}
From \eqref{eq:cond1}, it follows that $\operatorname{rank}M_{p,q}\leq1$ is a necessary condition.  
Conversely, now assume that $\operatorname{rank}{M_{p,q}}\leq1$ and study the following two cases separately:
\begin{enumerate}[\rm (a)]
\item If $\operatorname{rank}{M_{p,q}}=1$, then $(\alpha,\beta)$ must correspond with a non-zero row of $M_{p,q}$ for \eqref{eq:sub_codim1} to be a subalgebra of $\mathcal{E}$.
Now, we consider the following scenarios for \eqref{eq:cond2} to be satisfied:
\begin{enumerate}[\rm (i)]
\item if $\alpha=0$ and $\beta\neq0$, then \eqref{eq:cond2} holds if and only if $a_{qp}=0$;
\item if $\beta=0$ and $\alpha\neq0$, then \eqref{eq:cond2} holds if and only if $a_{pq}=0$;
\item if $\alpha,\beta\neq0$, then \eqref{eq:cond2} holds if and only if there exists a scalar $k\in\mathbb{K}^*$ such that $\alpha^2a_{pp}+\beta^2a_{qp}=k\alpha$ and $\alpha^2a_{pq}+\beta^2a_{qq}=k\beta$, or equivalently, if \eqref{eq:cond} holds.
\end{enumerate}
Notice that if $\alpha=0$ and $\beta\neq0$ (resp. $\beta=0$ and $\alpha\neq0$), then \eqref{eq:cond} holds if and only if $a_{qp}=0$ (resp. $a_{pq}=0$). Thus, we conclude that, in this case, \eqref{eq:sub_codim1} is a subalgebra if and only if \eqref{eq:cond}.

\item If $\operatorname{rank}{M_{p,q}}=0$, then it is evident that $\spa\{e_i\colon i\neq q\}$ and $\spa\{e_i\colon i\neq p\}$ are subalgebras if and only if $a_{pq}=0$ and $a_{qp}=0$, respectively. Moreover, \eqref{eq:sub_codim1} with $v=\alpha e_p+\beta e_q$ for some $\alpha,\beta\neq0$, or equivalently, with  $v=e_p+\lambda e_q$ for $\lambda\neq0$, is a subalgebra if and only if there exists a scalar $k\in\mathbb{K}^*$ such that $a_{pp}+\lambda^2 a_{qp}=k$ and $a_{pq}+\lambda^2 a_{qq}=k\lambda$, which is equivalent to $\lambda$ being a nonzero solution of \eqref{eq:cond_pol}.
\end{enumerate} 
Thus, the result follows.
\end{proof}
\begin{remark}
	Although every nonzero proper subalgebra of a regular evolution algebra of dimension two trivially admits a natural basis, as they are of dimension one, the computations in Proposition \ref{prop:sub_codim1} also play a key role in fully characterising all subalgebras in this case. Specifically, given a regular evolution algebra $\mathcal{E}$ over any field $\mathbb{K}$ with natural basis $\{e_1,e_2\}$ and structure matrix $M_B(\mathcal{E})=(a_{ij})_{i,j=1}^2$, the subalgebras are given by $\spa\{e_1+\lambda e_2\}$
	for any $\lambda\in\mathbb{K}^*$ satisfying the equation
	\begin{align*}
		a_{21}\lambda^3-a_{22}\lambda^2+ a_{11}\lambda-a_{12}=0;
	\end{align*} and additionally, $\spa\{e_1\}$ when $a_{12}=0$ and $\spa\{e_2\}$ when $a_{21}=0$.
\end{remark}
\begin{corollary}
	Let $\mathcal{E}$ be a regular evolution algebra over any field $\mathbb{K}$ of dimension greater than two,  with natural basis $B=\{e_1,\dots,e_n\}$ and structure matrix $M_B(\mathcal{E})=(a_{ij})_{i,j=1}^n$. If the subspace \eqref{eq:sub_codim1} is a subalgebra for some $v\in\spa\{e_p,e_q\}$, then the structure constants satisfy $$a_{ip}^2a_{iq}a_{pp}+a_{iq}^3a_{qp}=a_{ip}^3a_{pq}+a_{ip}a_{iq}^2a_{qq}$$ for all $i\neq p,q$. Particularly, if $\mathcal{E}$ is three-dimensional and $\mathbb{K}$ is algebraically closed or $\mathbb{K}=\mathbb{R}$, then the converse also holds.
	\end{corollary}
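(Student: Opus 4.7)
The plan is to derive both directions of the corollary from Proposition~\ref{prop:sub_codim1}, treating the forward implication (valid for any dimension) separately from the converse (which invokes the field hypothesis in just one subcase).

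For the forward direction, I would assume that the subspace \eqref{eq:sub_codim1} with $v = \alpha e_p + \beta e_q$ is a subalgebra. Proposition~\ref{prop:sub_codim1} forces $\operatorname{rank} M_{p,q} \leq 1$. If the rank is zero, then $a_{ip} = a_{iq} = 0$ for every $i \neq p,q$, and since each monomial in the target identity contains a factor from $\{a_{ip},a_{iq}\}$, the identity collapses to $0=0$. If the rank is one, I would fix the nonzero row $(\alpha,\beta)$ provided by Proposition~\ref{prop:sub_codim1}(a); every other row takes the form $(a_{ip},a_{iq}) = \lambda_i(\alpha,\beta)$ for some $\lambda_i \in \mathbb{K}$. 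Multiplying \eqref{eq:cond} by $\lambda_i^3$ and substituting $\lambda_i\alpha = a_{ip}$, $\lambda_i\beta = a_{iq}$ gives the required identity for each $i$ (rows with $\lambda_i = 0$ contribute $0=0$, so no special treatment is needed).

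For the converse in dimension three, let $i$ be the unique index distinct from $p$ and $q$. If $(a_{ip},a_{iq}) \neq (0,0)$, the single row of $M_{p,q}$ has rank one, so I apply Proposition~\ref{prop:sub_codim1}(a) with $(\alpha,\beta) = (a_{ip},a_{iq})$: the hypothesis coincides exactly with \eqref{eq:cond}, producing the desired codimension-one subalgebra. If $(a_{ip},a_{iq}) = (0,0)$, the hypothesis is vacuous, so I have to produce the subalgebra directly from Proposition~\ref{prop:sub_codim1}(b). When $a_{pq}=0$ or $a_{qp}=0$, the additional subalgebras $\spa\{e_j \colon j\neq q\}$ or $\spa\{e_j \colon j\neq p\}$ settle the issue; otherwise, $\lambda=0$ is not a root of \eqref{eq:cond_pol}, so any root whatsoever yields a subalgebra. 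This is precisely where the field assumption enters: over an algebraically closed $\mathbb{K}$ every cubic has a root, and over $\mathbb{R}$ every odd-degree polynomial has a real root.

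I expect the main difficulty to lie entirely in this last subcase: tracking precisely when the condition on structure constants is insufficient to force a subalgebra without extra field-theoretic input, and verifying that the only such situation is the rank-zero regime with $a_{pq},a_{qp} \neq 0$, where \eqref{eq:cond_pol} supplies the missing parameter. Everything else is a direct unwinding of Proposition~\ref{prop:sub_codim1} through the rank-one proportionality relation.
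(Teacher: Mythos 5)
Your proof is correct and follows the same route as the paper's: both directions are unwound from Proposition~\ref{prop:sub_codim1}, with the field hypothesis entering only to guarantee a nonzero root of \eqref{eq:cond_pol} in the rank-zero case with $a_{pq},a_{qp}\neq0$. You merely supply more detail than the paper --- in particular the $\lambda_i^3$-scaling of \eqref{eq:cond} for the necessity, which the paper dismisses as straightforward.
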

\begin{proof}
	The necessity follows straightforwardly from Proposition \ref{prop:sub_codim1}.
	
	For sufficiency in dimension three, first note we always have that $\operatorname{rank}M_{p,q}\leq1$ for any distinct indices $p,q\in\{1,2,3\}$. Moreover, as $\mathbb{K}$ is algebraically closed or $\mathbb{K}=\mathbb{R}$, then \eqref{eq:cond_pol} with $a_{pq},a_{qp}\neq0$ always has a solution in $\mathbb{K}$. 
\end{proof}
We conclude this note with two examples which show how the hypothesis for equivalence cannot be relaxed. Example \ref{ex:1} illustrates the importance of the choice of the field, while Example \ref{ex:2} highlights the necessity of the evolution algebra to have dimension three.
\begin{example}\label{ex:1}
	Let $\mathcal{E}$ be the regular evolution algebra over $\mathbb{Q}$ with natural basis $\{e_1,e_2,e_3\}$ and structure matrix given by
	\[M_B(\mathcal{E})=\begin{pmatrix*}[r]
		1&0&0 \\ 1&-1&1 \\ 2&1&0
	\end{pmatrix*}.\]
	Moreover, the structure constants satisfy the relation $$a_{12}^2a_{13}a_{22}+a_{13}^3a_{32}=a_{12}^3a_{23}+a_{12}a_{13}^2a_{33}.$$ However, according to Proposition \ref{prop:sub_codim1}, we have that $a_{23},a_{32}\neq0$, and the polynomial $$a_{32}x^3-a_{33}x^2+ a_{22}x-a_{23}=x^3-x-1$$ is irreducible over $\mathbb{Q}$. Consequently, there are no subalgebras of dimension two of the form $\spa\{e_1,v\}$ with $v\in\spa\{e_2,e_3\}$. Moreover, since$$a_{21}^2a_{23}a_{11}+a_{23}^3a_{31}\neq a_{21}^3a_{13}+a_{21}a_{23}^2a_{33}\quad\text{and}\quad a_{31}^2a_{32}a_{11}+a_{32}^3a_{21}\neq a_{31}^3a_{12}+a_{31}a_{32}^2a_{22},$$ $\mathcal{E}$ does not have subalgebras of dimension two.  
\end{example}
\begin{example}\label{ex:2}
Let $\mathcal{E}$ be a regular evolution algebra over any field $\mathbb{K}$ with natural basis $\{e_1,e_2,e_3,e_4\}$ and the two last columns of the structure matrix $M_B(\mathcal{E})$ given by
	\[\begin{pmatrix*}[r]
			a_{13}&a_{14} \\ a_{23}&a_{24} \\ a_{33}&a_{34} \\ a_{43}&a_{44}
		\end{pmatrix*}=\begin{pmatrix*}[r]
		1&2 \\ 1&-1 \\ -3&2 \\1&0
	\end{pmatrix*}.\]
Notice that $a_{i3}^2a_{i4}a_{33}+a_{i4}^3a_{43}=a_{i3}^3a_{34}+a_{i3}a_{i4}^2a_{44}$ for $i=1,2$. However, $$\operatorname{rank}M_{3,4}=\operatorname{rank}\begin{pmatrix*}[r]
		1&2 \\ 1&-1
	\end{pmatrix*}=2,$$ what yields, by Proposition \ref{prop:sub_codim1}, that there are no subalgebras of dimension three of the form $\spa\{e_1,e_2,v\}$ with $v\in\spa\{e_3,e_4\}$.
\end{example}

\section*{Acknowledgements}
This work was partially supported by Agencia Estatal de Investigaci\'on (Spain),
grant PID2020-115155GB-I00 (European FEDER support included, UE) and
by Xunta de Galicia through the Competitive Reference Groups (GRC), ED431C
2023/31.
The third author was also supported by FPU21/05685 scholarship, Ministerio de Educaci\'on y Formaci\'on Profesional (Spain).


\begin{thebibliography}{1}
\expandafter\ifx\csname url\endcsname\relax
  \def\url#1{\texttt{#1}}\fi
\expandafter\ifx\csname urlprefix\endcsname\relax\def\urlprefix{URL }\fi

\bibitem{BCS_22_nat}
N.~Boudi, Y.~Cabrera~Casado, M.~Siles~Molina, Natural families in evolution
  algebras, Publ. Mat. 66~(1) (2022) 159--181.
\newline\urlprefix\url{https://doi.org/10.5565/publmat6612206}

\bibitem{CBGT_24}
Y.~Cabrera~Casado, D.~M. Barquero, C.~M. Gonz{\'a}lez, A.~Tocino, Connecting
  ideals in evolution algebras with hereditary subsets of its associated graph,
  Collect. Math. (2024) 1--16.
\newline\urlprefix\url{https://doi.org/10.1007/s13348-024-00435-x}

\bibitem{CKS_19}
Y.~Cabrera~Casado, M.~Kanuni, M.~Siles~Molina, Basic ideals in evolution
  algebras, Linear Algebra Appl. 570 (2019) 148--180.
\newline\urlprefix\url{https://doi.org/10.1016/j.laa.2019.01.010}

\bibitem{CSV_16}
Y.~Cabrera~Casado, M.~Siles~Molina, M.~V. Velasco, Evolution algebras of
  arbitrary dimension and their decompositions, Linear Algebra Appl. 495 (2016)
  122--162.
\newline\urlprefix\url{https://doi.org/10.1016/j.laa.2016.01.007}

\bibitem{CLTV_21}
C.~Costoya, P.~Ligouras, A.~Tocino, A.~Viruel, Regular evolution algebras are
  universally finite, Proc. Amer. Math. Soc. 150~(3) (2022) 919--925.
\newline\urlprefix\url{https://doi.org/10.1090/proc/15648}

\bibitem{EL_15}
A.~Elduque, A.~Labra, Evolution algebras and graphs, J. Algebra Appl. 14~(7)
  (2015) 1550103, 10 pp.
\newline\urlprefix\url{https://doi.org/10.1142/S0219498815501030}

\bibitem{EL_21}
A. Elduque, A. Labra, Evolution algebras, automorphisms, and graphs, Linear Multilinear Algebra
69 (2) (2021) 331--342.
\newline\urlprefix\url{https://doi.org/10.1080/03081087.2019.1598931}

\bibitem{E_40}
 I.~M.~H.~Etherington, Genetic algebras, Proc. Roy. Soc. Edinburgh 59 (1939) 242--258.
\newline\urlprefix\url{https://doi.org/10.1017/S0370164600012323}

\bibitem{LPP_25_lattice}
M.~Ladra, P.~Páez-Guillán, A.~Pérez-Rodríguez, On the subalgebra lattice of
  solvable evolution algebras.
\newline\urlprefix\url{https://arxiv.org/abs/2502.05619}

\bibitem{Tian_08}
J.~P. Tian, Evolution algebras and their applications, vol. 1921 of Lecture
  Notes in Mathematics, Springer, Berlin, 2008.
\newline\urlprefix\url{https://doi.org/10.1007/978-3-540-74284-5}

\bibitem{TV_06}
J.~P. Tian, P.~Vojt{\v{e}}chovsk{\'y}, Mathematical concepts of evolution
  algebras in non-{M}endelian genetics, Quasigroups Related Systems 14~(1)
  (2006) 111--122.

\end{thebibliography}
\end{document}